\pgfplotsset{compat=1.14}
\newtheorem{remark}{Remark}
\newtheorem{model}{Model}
\newtheorem{definition}{Definition}
\newtheorem{theorem}{Theorem}
\newtheorem{corollary}{Corollary}
\newtheorem{lemma}{Lemma}
\title{Gradient Formulation for the Stability of DC-Microgrids}
\author{Alejandro Garces}
\date{August 2018}
\begin{document}

\maketitle
\begin{abstract}
\textcolor{black}{
This paper presents a non-linear stability analysis for dc-microgrids in both, interconnected mode and island operation with primary control.  The proposed analysis is based on the fact that the dynamical model of the grid is a gradient system generated by a strongly convex function. The stability analysis is thus reduced to a series of convex optimization problems. The proposed method allows to: i) demonstrate the existence and uniqueness of the equilibrium ii) calculate this equilibrium numerically iii) give conditions for global stability using a Lyapunov function iv) estimate the attraction region.  Previous works only address one of these aspects. Numeric calculations performed in cvx and simulations results in Matlab complement the analysis and demonstrate how to use this theoretical results in practical problems.  
}
\end{abstract}

\begin{IEEEkeywords}
non-linear circuits, dc-microgrids, stability analysis, convex optimization, gradient systems.
\end{IEEEkeywords}

\section{Introduction}

\IEEEPARstart{DC}{microgrids} and dc distribution are promising technologies for integrating solar panels, batteries and fuel-cells among other components that operate in dc.   Most of these components are integrated through a power electronic converter that controls a constant power resulting in a non-linear circuit \cite{review_dcmicrogirds}. In particular,  constant power loads can introduce a negative resistance effect which in turns generates transient stability problems.  Rigorous methodologies for stability analysis are required in this new context.

Several stability methodologies have been proposed for systems with ad-hoc controls \cite{adhoc}.  However, the conventional droop control with a constant power model is the most common approach for control and stabilization of microgrids \cite{DC_Parte1}. Most of the stability studies on this type of controls are based on linearization (i.e small signal stability).  Transient stability in these generalized models is still a challenge due to their non-linear behavior; even finding the equilibrium point can be a challenge.

This paper proposes a methodology for transient stability of dc-microgrids based on the study of gradient systems.  Although these type of systems have a rich and general theory, we are interested in a particular type, namely, gradient systems with a strongly convex function (we will call these as strongly-convex-gradient systems). \textcolor{black}{We demonstrate the existence and uniqueness of the equilibrium as well as the conditions for global stability. In addition, we show a simple method for calculating this equilibrium and also estimating the region of attraction.  Being a convex problem, we can guarantee convergence of the algorithms.  Our model is simple enough to be tractable computationally but showing the main interaction between components of the nonlinear circuit.  Just as in the case of the second order model for power systems applications}. The proposed analysis shows a surprising connection between dynamical systems and convex analysis.  This connection is explored from a practical point of view, since the stability analysis is transformed into a series of convex optimization problems that can be solved numerically.  To the best of the author's knowledge, there is not applications of the approach proposed in this paper.

The existence of the equilibrium was analyzed in \cite{koguiman} for one constant power load and generalized in \cite{romeo_existencia_equilibrio} for several loads.   A different approach was presented in \cite{yo_elsevier} and \cite{yo_tps} based on the convergence of the power flow.   In the later methods, it was demonstrated that conventional algorithms such as Gauss and Newton's methods converge to a unique equilibrium point under well defined conditions. 

From the stability point of view, several studies have been presented for the small signal case \cite{small}\cite{small2}\cite{small3}. However, transient stability studies are required in order to increase the accuracy of the study and consider the non-linear behavior of the grid. A recent review of transient stability analysis in microgrids can be found in \cite{transient_review}.\textcolor{black}{That review showed the necessity of systematic methods for large-scale stability analysis and reduced order models of the grid. Our method fulfills these conditions}.   	  In \cite{stab} a stabilization method was proposed for a dc-microgrid \textcolor{black}{in which all the terminals were connected to the same bus-bar with only one equivalent constant power load. Droop controls were considered only on the sources. Our method consider the topology of the grid with different constant power loads and droop control in both the constant voltage and constant power terminals.} In fact, constant power loads have been the main concern of recent stability analysis such as \cite{cpl1} and \cite{estabilidad_sdp}. However, most of these studies are developed for ad-hoc controls requiring  a detailed model of the converter. \textcolor{black}{None of these approaches reveals the gradient characteristics of the model.} 

\textcolor{black}{The use of convex analysis is also a contribution of this paper.  Although this type of analysis is usually consider in linear matrix inequalities, that type of analysis is linear whereas the method presented here is nonlinear}.  In \cite{estabilidad_sdp} a semidefinite programming methodology was proposed by formulating a Lure problem with quadratic bounds. That formulation allowed to estimate the region of attraction in grids with constant power loads.  However the topology of the grid was limited to a unique bus-bar and the analysis is basically linear.

The rest of the paper is organized as follows:  Section II presents the dynamical model of the dc-microgrid considering constant power terminals.  Section III describes the stability analysis based on the use of gradient systems with strongly convex functions. Section IV explain how the stability problem is transformed into convex optimization models which allow to determine the attraction region, equilibrium point and under-voltage limit. Simulation results are presented in Section V followed by conclusions, appendix and references.

\section{Problem definition}

Let us consider a dc-microgrid with droop control that is expected to operate whether grid-connected or in island-mode.  The master node is represented by $0$ and maintains a constant voltage $v_0$; the rest of the nodes are represented by $N=\left\{1,2,\cdots n \right\}$ and maintain a constant power which can be positive or negative. The grid is purely restive and is represented by the admittance matrix which includes linear loads eliminated by a kron's reduction \cite{kron}. The model of each constant power terminal is depicted in Fig \ref{fig:cpl}. It includes the capacitive effect of the converter and the droop control. This model is widely used in different applications including dc-microgrids \cite{danilo} and multiterminal HVDC transmision \cite{hvdc}.    

\begin{figure}[tb]
    \centering
    \footnotesize
    \begin{tikzpicture}[x=0.8mm,y=0.8mm,thick,blue!70!green]
    \draw[gray,fill=gray!10] (-8,-15) rectangle +(70,30);
    \draw[gray,fill=gray!10] (70,-15) rectangle +(20,70);
    \node[text width=30] at (82,15) {Grid};
    \draw[-latex] (33,10) node[above] {$p_i$} -- +(0,-8);
    \draw (33,0) circle (2);
    \draw[-latex] (21,0) -- +(10,0);
    \draw (11,-5) rectangle +(10,10);
    \draw [-latex] (5,0) node[left] {$\Delta v_i$} -- +(6,0);
    \node at (17,0) {$k_i$};
    \draw[-latex] (35,0) -- (45,0);
    \draw (45,0) -- +(2.5,5) -- +(5,0) -- +(2.5,-5) -- cycle;
    \draw[-latex] (47.5,-3) -- +(0,6);
    \draw (47.5,5) |- +(22.5,8);
    \draw (47.5,-5) |- +(22.5,-8);
    \draw (55,13) -- +(0,-12);
    \draw (55,-13) -- +(0,12);
    \draw (53,1) -- +(4,0);
    \draw (53,-1) -- +(4,0);
    \node at (40,4) {$\frac{1}{v_i}$};
    \node at (63,0) {$c_i\frac{dv_i}{dt}$};
    \node at (25,17) {Slave node with primary control};
    \draw[gray, fill=gray!10] (50,50) rectangle +(10,8);
    \node at (53,47) {Switch};
    \draw (10,40) circle (3);
    \node at (10,40) {$\pm$};
    \node at (17,40) {$v_0$};
    \node at (27,50) {Master node};
    \draw (10,43) |- +(43,10) -- +(46,13);
    \draw (55,53) -- +(15,0);
    \draw (10,37) |- +(60,-10);
    \end{tikzpicture}
    \caption{Shcematic representation of the dc-microgrid}
    \label{fig:cpl}
\end{figure}
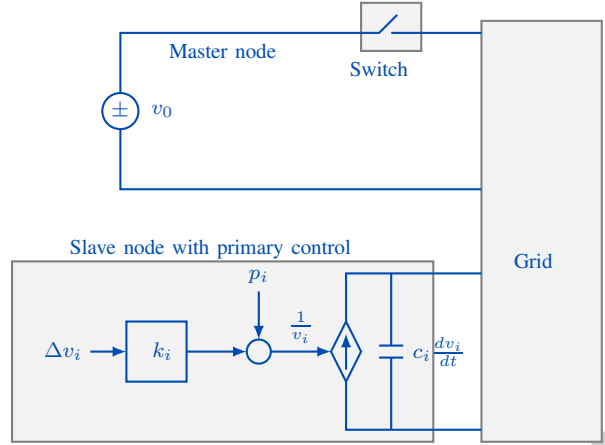

The dynamics of the grid can be represented as
follows:
\begin{equation}
c_i \frac{dv_i}{dt} = \frac{p_i+k_i(1-v_i)}{v_i}-g_{0i}v_0 - \sum_{i=1}^{n} g_{ij} v_j 
\label{eq:model}
\end{equation}
where $v_i$ are nodal voltages, $g_{ij}$ are entries of the admittance matrix and $c_i$ is the capacitance of the converter. Notice this is a non-linear dynamical system due to the presence of constant power devices.  The model is the same for grid connected or island operation.  The only difference is that under island operation, the switch is opened and hence $g_{i0}=0$. 

On the other hand, each converter is equipped with a droop control with constant $k_i$.   We assume that $c_i>0$ and $k_i\geq 0$ meaning that we can have terminals that do not participate in the primary control ($k_i$ can be zero). In addition, the graph that represents the grid is connected and hence $G=[g_{ij}]\succ 0$ (i.e it is positive definite). We allow transients generated by disconnection of the master node or abrupt changes in generation or demand.  However, we assume the grid is not under short circuit and hence $v_i>0$ (our model would produce an infinite current for a shorcircuit in the capacitor).  

It is important to notice that this model is general enough and allows to represent different components according to their functionality.  For example, a solar panel is just a constant power device with $p_i>0$ while a constant power load is the same model but with $p_i<0$.   As we will demonstrate below, constant power generation does not create big stability problems being the constant power loads the main source of instabilities. A simple justification of this fact, is that constant power loads generates a negative resistance effect that induce instability.

\section{Stability Analysis}

\subsection{Convex analysis and gradient systems}

We are interested in using convex optimization for analyzing stability in a particular type of dynamical systems, namely, gradient systems with strongly convex functions.  Therefore, we will use some results from convex optimization restricting our analysis to continuously differentiable functions, in order to be used as Lyapunov candidates.  Following this idea, let us start by this classical result 

\begin{theorem}
Consider a non-empty convex set $\Omega\in\mathbb{R}^n$ and a twice differentiable function $W:\Omega\rightarrow \mathbb{R}$  such that
\begin{equation}
    \frac{\partial ^2 W}{\partial x^2} \succeq \mu I_N
\end{equation}
where $\mu$ is a real number such that $\mu>0$, $\partial ^2 W/\partial x^2$ is the Hessian matrix of $W$ and $I_N$ is the identity matrix of size $n$. We say that $W$ is a strongly convex; for this type of function there exists a global minimum  $\tilde{x}\in\Omega$ and this minimum is unique.
\end{theorem}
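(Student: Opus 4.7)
The plan is to split the statement into two pieces, uniqueness and existence, and obtain both from the same quadratic lower bound that strong convexity affords. Starting from the Hessian condition $\partial^2 W/\partial x^2 \succeq \mu I$, a second-order Taylor expansion with integral remainder along any segment $[x_0,x]\subset \Omega$ yields the key inequality
\begin{equation}
W(x) \;\geq\; W(x_0) \;+\; \nabla W(x_0)^{\!\top}(x-x_0) \;+\; \frac{\mu}{2}\,\|x-x_0\|^2,
\end{equation}
valid for all $x_0,x\in\Omega$. I would establish this bound first, since both halves of the theorem reduce to it.

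For uniqueness, I would argue by contradiction. Suppose $x_1$ and $x_2$ are both global minima with $x_1\neq x_2$. Applying strong convexity along the segment gives, for any $t\in(0,1)$,
\begin{equation}
W\bigl(tx_1+(1-t)x_2\bigr) \;\leq\; tW(x_1)+(1-t)W(x_2) \;-\; \frac{\mu}{2}\,t(1-t)\,\|x_1-x_2\|^2.
\end{equation}
Since $W(x_1)=W(x_2)=W^\star$ and the midpoint lies in $\Omega$ by convexity, the right-hand side is strictly smaller than $W^\star$, contradicting the minimality.

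For existence, I would exploit the quadratic lower bound to get coercivity. Fixing any $x_0\in\Omega$ and using Cauchy--Schwarz on the linear term shows that $W(x)\to\infty$ as $\|x\|\to\infty$, so every sublevel set $\{x\in\Omega:W(x)\le W(x_0)\}$ is bounded. Combined with continuity of $W$ and the standing assumption that $\Omega$ is closed, Weierstrass' theorem gives a minimizer $\tilde x\in\Omega$; uniqueness then follows from the previous paragraph.

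The main obstacle is the existence step: the statement as written only posits that $\Omega$ is a non-empty convex set, and strong convexity alone does not guarantee attainment on, say, an open set. In the proof I would therefore make explicit the mild regularity (closedness of $\Omega$, or equivalently taking $\Omega=\mathbb{R}^n$ as in the application to \eqref{eq:model}) that is tacitly needed to invoke Weierstrass; once that is in place, the quadratic lower bound does all the work and the rest is routine.
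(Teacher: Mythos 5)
Your proof is correct, and it is genuinely more than the paper offers: the paper's ``proof'' of this theorem is a bare citation to a convex-optimization reference, whereas you give a self-contained argument. Your decomposition is the standard and right one --- derive the quadratic lower bound $W(x)\geq W(x_0)+\nabla W(x_0)^{\top}(x-x_0)+\tfrac{\mu}{2}\|x-x_0\|^2$ from the Hessian condition via the integral form of Taylor's theorem (legitimate here because convexity of $\Omega$ keeps the segment inside the domain), then read off uniqueness from the strict strong-convexity inequality and existence from coercivity plus Weierstrass. The one point where you add real value beyond reproducing a textbook argument is your observation that the statement, as written, is too weak for the existence half: a non-empty convex $\Omega$ need not be closed, and $W(x)=x$ restricted to an open interval (or any strongly convex function on an open set whose infimum is attained only on the boundary) has no minimizer in $\Omega$. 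You are right that closedness of $\Omega$ (or $\Omega=\mathbb{R}^n$) must be assumed tacitly; this caveat is not academic for this paper, since the sets $\Omega_v=\{v\in\mathbb{R}^n_+\}$ used later are open, so strictly speaking the existence claim there also leans on the equilibrium lying in the interior rather than on the theorem as stated. Your proof stands; just state the closedness hypothesis explicitly rather than leaving it as a remark at the end.
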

\begin{proof}
See \cite{nemirovski}
\end{proof}

\begin{remark}
There is a more general theorem which includes strictly convex functions (see \cite{nemirovski} for more details).  However, Theorem 1 is general enough for our dynamical problem and avoids the proliferation of unnecessary definitions and technicalities. 
\end{remark}

\begin{remark} It is well known that convex functions have a global optimum.  However, for the case strongly convex functions, this optimum is not only global but also unique. \end{remark}

Let us connect this result with the dynamical system theory by proposing the following definition

\begin{definition}
A strongly-convex-gradient system is a dynamical system that can be represented as
\begin{equation}
    M(x) \frac{dx}{dt} = -\frac{\partial W}{\partial x}
    \label{eq:gradient_system}
\end{equation}
where $M\succ 0$ (i.e positive definite) and $\partial H/\partial x$ represents the gradient of a strongly convex function $W:\Omega\in\mathbb{R}^n\rightarrow \mathbb{R}$.
\end{definition}

With this simple definitions we can present our first result:
\begin{theorem}
 Consider a strongly-convex-gradient system described by  (\ref{eq:gradient_system}). Then, there is a unique equilibrium point $\tilde{x}\in\Omega$ and this equilibrium is asymptotically stable.  In addition $\Omega$ is an estimation of the atraction region.
\end{theorem}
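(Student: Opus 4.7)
The plan is to deduce all three conclusions from a single Lyapunov candidate built directly out of $W$, using Theorem 1 as the main input. First I would argue existence and uniqueness of the equilibrium. Because $M(x)\succ 0$ is invertible pointwise, equilibria of (\ref{eq:gradient_system}) are exactly the zeros of $\partial W/\partial x$. Strong convexity of $W$ guarantees, via Theorem 1, a unique global minimizer $\tilde{x}\in\Omega$, and first-order optimality yields $\partial W/\partial x(\tilde{x})=0$. Conversely, any critical point of a strongly convex function is the global minimizer, hence coincides with $\tilde{x}$. Thus the equilibrium exists and is unique.

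Second, I would take as Lyapunov candidate the shifted energy $V(x)=W(x)-W(\tilde{x})$. By strong convexity and the bound $W(x)\geq W(\tilde{x})+\tfrac{\mu}{2}\|x-\tilde{x}\|^2$, we have $V(\tilde{x})=0$ and $V(x)>0$ for every $x\in\Omega\setminus\{\tilde{x}\}$, and $V$ is radially unbounded. Differentiating along trajectories and substituting (\ref{eq:gradient_system}),
\begin{equation}
\dot{V}(x)=\left(\frac{\partial W}{\partial x}\right)^{\!\top}\!\dot{x}=-\left(\frac{\partial W}{\partial x}\right)^{\!\top}\!M(x)^{-1}\frac{\partial W}{\partial x}.
\end{equation}
Since $M(x)\succ 0$ implies $M(x)^{-1}\succ 0$, this quadratic form is strictly negative whenever $\partial W/\partial x\neq 0$, i.e., whenever $x\neq\tilde{x}$. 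The standard Lyapunov (Barbashin--Krasovskii) theorem then delivers asymptotic stability of $\tilde{x}$.

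Third, to see that $\Omega$ estimates the region of attraction, I would use monotonicity of $V$ together with its strongly convex sublevel structure. For any $x_0\in\Omega$ the trajectory lies in the sublevel set $\Omega_0=\{x\in\Omega:V(x)\leq V(x_0)\}$, which is convex and bounded by the quadratic lower bound on $V$. Because $V$ is strictly decreasing off $\tilde{x}$, the trajectory cannot touch a level set $\{V=V(x_0)\}$ again, and LaSalle's invariance principle identifies $\{\tilde{x}\}$ as the only invariant subset of $\{\dot{V}=0\}$, giving convergence $x(t)\to\tilde{x}$. Hence every initial condition in $\Omega$ for which the associated sublevel set remains inside $\Omega$ is attracted to $\tilde{x}$, which is the sense in which $\Omega$ is an inner estimate of the region of attraction.

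The main obstacle is precisely that last clause: guaranteeing that trajectories starting in $\Omega$ do not exit $\Omega$. Strong convexity gives boundedness and convexity of the sublevel sets of $V$, but not automatically their containment in $\Omega$. If $\Omega=\mathbb{R}^n$ this is vacuous and we get global asymptotic stability; otherwise one must either verify that the largest sublevel set of $V$ fitting in $\Omega$ is positively invariant, or impose a geometric condition making $\partial\Omega$ nowhere attractive under the flow. In the application to dc-microgrids this is where the positivity constraint $v_i>0$ becomes the delicate point, and I would expect the rigorous statement to read ``the largest connected sublevel set of $V$ contained in $\Omega$ is an estimate of the attraction region'' rather than $\Omega$ itself.
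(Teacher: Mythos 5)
Your proposal follows essentially the same route as the paper: equilibria are the zeros of $\partial W/\partial x$ because $M(x)$ is invertible, uniqueness comes from Theorem 1, the Lyapunov candidate is $\mathcal{V}(x)=W(x)-W(\tilde{x})$ with $\dot{\mathcal{V}}=-(\partial W/\partial x)^{T}M(x)^{-1}(\partial W/\partial x)\leq 0$, and asymptotic stability follows from LaSalle. So on the core claims you and the paper coincide.

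Where you genuinely add something is the last clause. The paper's proof simply invokes LaSalle and (in the proof of Theorem 3) asserts that the relevant set ``is an invariant set'' without argument; your observation that strong convexity gives bounded, convex sublevel sets of $\mathcal{V}$ but does \emph{not} by itself guarantee that trajectories starting in $\Omega$ stay in $\Omega$ is exactly the missing step. Your proposed repair --- replacing ``$\Omega$'' by ``the largest sublevel set of $\mathcal{V}$ contained in $\Omega$'' --- is the standard and correct way to close this gap, and it is indeed the delicate point in the dc-microgrid application, where $\Omega$ is cut out by the constraint $v_i>0$ together with the LMI condition and is not a priori positively invariant. In short: same proof skeleton, but your version is the more rigorous one on the attraction-region claim.
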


\begin{proof}  
Consider the gradient system (\ref{eq:gradient_system}) with a strongly convex function $W$.  The equilibrium point is given by the points in which $\partial W/\partial x = 0$ which correspond to the global minimum of $W$.  This minimum is also unique due to Theorem 1, guaranteeing the existence and uniqueness of the equilibrium point.  For the stability analysis, consider a Lypunov function $\mathcal{V}(x)=W(x)-W(\tilde{x})$ which evidently fulfills the conditions for stability, namely  $\mathcal{V}(\tilde{x})=0$, $\mathcal{V}(x)>0,\; \forall x\in\Omega-\left\{\tilde{x}\right\}$ and 
\begin{align}
    \frac{d\mathcal{V}}{dt} &= -\left(\frac{\partial W}{\partial x}\right)^T M(x)^{-1} \left(\frac{\partial W}{\partial x}\right) \leq 0
\end{align}
Asymptotic stability is directly demonstrated by invoking  LaSalle's invariant principle principle.

\end{proof} 

This simple result allows to transform the problem of stability into a convex optimization problem.  This is an advantage since we can calculate numerically the equilibrium point and estimate the region of attraction as will be demonstrated in the next sections.

\subsection{Dc-microgrids as gradient systems}

Dc-microgrids can be represented as strongly-convex gradient systems.  Let us formalize this by the following lemma that can be demonstrated by simple substitution and corresponding derivation:

\begin{lemma}
The system given by (\ref{eq:model}) can be represented as a gradient system with gradient $W$ given by 
\begin{equation}
\begin{split}
    W(v) = -(p_i+k_i) \ln(v_i) + \sum_{i=1}^{n} g_{oi}v_0v_i + \\ \frac{1}{2}\sum_{i=1}^{n}\sum_{j=1}^{n} g_{ij}v_i v_j + \frac{1}{2} \sum_{i=1}^{n} k_iv_i^2
\end{split}
\end{equation}

and $M=diag(c_i)\succ 0$.  In addition, the Hessian matrix of $W$ is given by
\begin{equation}
\frac{\partial^2 W}{\partial v^2} =
(P+K)X + G + K
\label{eq:hesiana}
\end{equation}
where $P=diag(p_i)$, $K=diag(k_i)$ and $X=diag(1/v_i^2)$
\end{lemma}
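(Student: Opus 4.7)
The plan is a direct verification by term-by-term differentiation, using the symmetry of the admittance matrix $G$. The first claim is that the right-hand side of (\ref{eq:model}) equals $-\partial W/\partial v_i$, and the second claim is the explicit computation of the Hessian.

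First, I would compute $\partial W/\partial v_i$ piece by piece, reading $W$ as the sum of four terms (with the log-term understood as summed over $i$, which is clear from context since the free index is otherwise unsaturated):
\begin{itemize}
\item The logarithmic contribution yields $-(p_i+k_i)/v_i$.
\item The linear coupling with the master node yields $g_{0i}v_0$.
\item The quadratic form $\tfrac12 v^{T} G v$ yields $\sum_{j} g_{ij}v_j$, using $G=G^{T}$.
\item The droop quadratic $\tfrac12 k_i v_i^2$ yields $k_i v_i$.
\end{itemize}
Collecting and multiplying by $-1$ gives
\begin{equation*}
-\frac{\partial W}{\partial v_i} \;=\; \frac{p_i+k_i}{v_i} - k_i v_i - g_{0i}v_0 - \sum_{j=1}^{n} g_{ij} v_j \;=\; \frac{p_i+k_i(1-v_i)}{v_i} - g_{0i}v_0 - \sum_{j=1}^{n} g_{ij} v_j,
\end{equation*}
which is exactly the right-hand side of (\ref{eq:model}). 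Since $c_i>0$ by hypothesis, $M=\mathrm{diag}(c_i)\succ 0$, and (\ref{eq:model}) has the form required by Definition 1.

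For the Hessian, I would differentiate once more. The log term contributes $(p_i+k_i)/v_i^2$ on the diagonal, and vanishes off-diagonal; the term $g_{0i}v_0 v_i$ is linear in $v$ and contributes nothing; the quadratic $\tfrac12 v^{T} G v$ contributes $g_{ij}$ in entry $(i,j)$; the droop quadratic contributes $k_i$ on the diagonal. Assembling these into matrix form yields
\begin{equation*}
\frac{\partial^{2} W}{\partial v^{2}} \;=\; (P+K)\,X + G + K,
\end{equation*}
with $P=\mathrm{diag}(p_i)$, $K=\mathrm{diag}(k_i)$, $X=\mathrm{diag}(1/v_i^{2})$, as claimed.

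The computation is entirely routine, so there is no real obstacle; the only thing one must be careful about is reading the first term of $W$ as summed over $i$ (otherwise the expression is dimensionally inconsistent with a function of the full vector $v$) and invoking the symmetry $g_{ij}=g_{ji}$ when differentiating the quadratic form so that the factor $1/2$ cancels against the two copies of each cross term. Neither step requires any deeper machinery than the chain rule.
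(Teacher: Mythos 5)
Your term-by-term differentiation of the stated $W$ is correct, and the Hessian you compute is consistent with (\ref{eq:hesiana}). The genuine gap is in the final ``collecting'' step, where you assert
\begin{equation*}
\frac{p_i+k_i}{v_i} - k_i v_i \;=\; \frac{p_i+k_i(1-v_i)}{v_i}.
\end{equation*}
This identity is false: the right-hand side equals $\frac{p_i+k_i}{v_i} - k_i$, so the two sides differ by $k_i(1-v_i)$, which vanishes only at $v_i=1$ or for terminals with $k_i=0$. What the lemma's $W$ actually generates is
\begin{equation*}
c_i\frac{dv_i}{dt} \;=\; \frac{p_i + k_i(1-v_i^2)}{v_i} - g_{0i}v_0 - \sum_{j=1}^{n} g_{ij}v_j,
\end{equation*}
which is not (\ref{eq:model}) as written. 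The discrepancy is internal to the paper: the quadratic droop term $\tfrac12 k_i v_i^2$ in $W$ and the $+K$ in the Hessian are mutually consistent, but they correspond to a droop power $k_i(1-v_i^2)$ rather than $k_i(1-v_i)$. A correct proof must resolve this one way or the other: either take (\ref{eq:model}) literally, in which case the potential must carry the linear term $\sum_i k_i v_i$ in place of $\tfrac12\sum_i k_i v_i^2$ and the Hessian becomes $(P+K)X+G$ with no trailing $+K$; or accept the stated $W$ and acknowledge that the dynamics being analyzed are those displayed above. By writing the false identity you have silently papered over the mismatch rather than proving the equivalence, so the proof as written does not establish the lemma.

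For context, the paper itself gives no written proof (it asserts the lemma follows ``by simple substitution and corresponding derivation''), so there is no argument to compare yours against; but carrying out that substitution carefully, as you almost did, is precisely what exposes the inconsistency. Your instinct to read the log term as summed over $i$ and to invoke $g_{ij}=g_{ji}$ for the quadratic form is right; the only failure is the unwarranted algebraic step above.
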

In order to demonstrate stability, it is enough to establish that exists a convex set $\Omega$ in which $W$ is strongly convex. This criteria is presented in the following theorem which constitute the main theoretical result of this paper: 

\begin{theorem}
A dc-microgrid with the conditions of Lemma 1 has an asymptotically stable equilibrium point if there exists a non-empty set containing the equilibrium $\tilde{x}$ which is given by
\begin{equation}
\Omega_x = \left\{\begin{array}{rl}
    (P+K)X+G + K &\succeq \mu I_N \\ 
    X &= diag(x) \\
    x_{i}\geq\; \tilde{x}_i &\geq 0 
    \end{array}\right\}
    \label{eq:set}
\end{equation}
with $\mu>0$. This set is an estimation of the region of attraction.
\end{theorem}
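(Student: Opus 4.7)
The plan is to reduce Theorem 3 to Theorem 2 by exhibiting on $\Omega_x$ the strongly-convex-gradient structure supplied by Lemma 1. First I would invoke Lemma 1, which writes the dc-microgrid model (\ref{eq:model}) in the form $M\dot{x}=-\partial W/\partial x$ with $M=\mathrm{diag}(c_i)\succ 0$ and with Hessian $\partial^2 W/\partial x^2=(P+K)X+G+K$ as in (\ref{eq:hesiana}). The first defining inequality of $\Omega_x$ then states exactly that $\partial^2 W/\partial x^2\succeq \mu I_N$ at every point of the set, which is the strong-convexity condition of Theorem 1. Hence $W$ restricted to $\Omega_x$ is strongly convex with constant $\mu$, and by Theorem 1 it has a unique global minimizer; since the equilibrium $\tilde{x}$ is a zero of $\partial W/\partial x$ and lies in $\Omega_x$ by hypothesis, this minimizer must coincide with $\tilde{x}$. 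This gives existence and uniqueness of the equilibrium in $\Omega_x$.

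Next I would check the geometric hypotheses needed for Theorem 2. The set $\Omega_x$ is non-empty and contains $\tilde{x}$ by assumption. The scalar conditions $x_i\geq \tilde{x}_i\geq 0$ are linear and therefore convex; the Hessian condition, being affine in the diagonal entries of $X$, is a linear matrix inequality in $x$ and defines a convex set. Intersecting these constraints shows $\Omega_x$ is a non-empty convex neighbourhood of $\tilde{x}$, which matches the setting of Definition 1 and Theorem 2. At that point the argument of Theorem 2 transfers verbatim: the candidate $\mathcal{V}(x)=W(x)-W(\tilde{x})$ is positive on $\Omega_x\setminus\{\tilde{x}\}$ by strong convexity, its derivative along trajectories equals $-(\partial W/\partial x)^{\top}M^{-1}(\partial W/\partial x)\leq 0$, and LaSalle's invariance principle yields asymptotic stability with $\Omega_x$ as an estimate of the region of attraction.

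The main obstacle I expect is establishing forward invariance of (the relevant sub-level set of) $\Omega_x$, because the Lyapunov derivative is only known to be non-positive while the trajectory remains inside the set; without invariance one cannot conclude that $\mathcal{V}$ is globally decreasing along the flow. The clean remedy is to take as the effective attraction estimate the largest sub-level set $\{x:\mathcal{V}(x)\leq c\}$ contained in $\Omega_x$. Strong convexity makes these sets bounded convex neighbourhoods of $\tilde{x}$, and they are forward invariant precisely because $\mathcal{V}$ is non-increasing along trajectories that remain in $\Omega_x$. The asymmetric constraint $x_i\geq \tilde{x}_i$ in the definition of $\Omega_x$ appears tailored to this purpose, keeping the diagonal entries of $X$ controlled so that the Hessian bound by $\mu I_N$ persists along the flow. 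Once invariance is in place, the conclusion of Theorem 2 transfers directly and the theorem follows.
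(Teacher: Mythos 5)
Your proposal is correct and follows essentially the same route as the paper: invoke Lemma 1 for the gradient structure, read the LMI in $\Omega_x$ as the strong-convexity condition of Theorem 1, and then apply the Lyapunov/LaSalle argument of Theorem 2. The one place you go beyond the paper is the forward-invariance issue: the paper simply asserts that $\Omega_x$ "is an invariant set," whereas your remedy of passing to the largest sub-level set of $\mathcal{V}$ contained in $\Omega_x$ is the standard and more defensible way to justify that claim.
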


\begin{proof}  
It is easy to see from Lemma 1 that the function $W$ is strongly convex  if the Hessian given by (\ref{eq:hesiana}) is such that $\partial^2 W/\partial v \succeq \mu I_N$. This is evidently true if the set (\ref{eq:set}) exists.  This is an estimation of the region of attraction since it is an invariant set.  
\end{proof} 

\begin{remark}
Recall that one of our main assumptions is that no capacitor is in short-circuit meaning that $v_i>0$.  Therefore, the change of variables $x_{i}=1/v_i^2$ is a bijection, i.e we can obtain directly the value of $v_i$ from the value of $x_{i}$ and vice versa.  We define two spaces $\Omega_x=\left\{x\in\mathbb{R}^n_+\right\}$ and $\Omega_v=\left\{v\in\mathbb{R}^n_+\right\}$ in order to simplify our nomenclature.

\end{remark}

This theorem, allows a simple methodology for analyzing stability in dc-microgrids using convex optimization. For example, a direct result is obtained for the generation case:   
\begin{corollary}
For the generation case  $(p_i>0)$ there is a unique equilibrium point and the region of attraction is the entire first quadrant (i.e $v_i>0$)
\end{corollary}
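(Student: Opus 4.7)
The plan is to invoke Theorem 2 with $\Omega_v$ taken to be the open positive orthant $\{v\in\mathbb{R}^n : v_i>0\}$ (equivalently, $\Omega_x$ in the transformed coordinates). This set is convex and non-empty, so only two things need to be checked: (i) the Hessian bound $(P+K)X+G+K \succeq \mu I_N$ holds with some $\mu>0$ uniformly on the entire orthant, and (ii) the equilibrium $\tilde v$ actually lies inside this set so that the hypothesis of Theorem 2 is met.

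For step (i), I would observe that under the generation hypothesis every term in the Hessian is positive semidefinite on the whole orthant: since $p_i>0$, $k_i\ge 0$ and $v_i>0$, the diagonal matrix $(P+K)X$ has strictly positive entries; the matrix $K$ is diagonal with nonnegative entries; and by assumption the Kron-reduced conductance matrix satisfies $G\succ 0$. Dropping the nonnegative contributions gives
\begin{equation}
(P+K)X+G+K \;\succeq\; G \;\succeq\; \lambda_{\min}(G)\,I_N,
\end{equation}
so the uniform choice $\mu:=\lambda_{\min}(G)>0$ works independently of $v$. Thus the set in (\ref{eq:set}) can be taken to be all of $\Omega_x$.

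For step (ii), I would establish existence and uniqueness of the equilibrium by a coercivity argument on $W$ restricted to $\Omega_v$. As any component $v_i\to 0^+$, the barrier term $-(p_i+k_i)\ln v_i\to +\infty$, which uses $p_i+k_i>0$, guaranteed by $p_i>0$. As $\|v\|\to\infty$, the quadratic part $\tfrac12 v^T(G+K)v$ grows at least like $\lambda_{\min}(G)\|v\|^2$, so it dominates the linear term $v_0\sum_i g_{0i}v_i$ and $W\to+\infty$. Combined with strong convexity of $W$ on $\Omega_v$ (from step (i)), this forces a unique interior minimizer $\tilde v\in\Omega_v$, which is precisely the equilibrium of the gradient system. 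Theorem 2 then yields asymptotic stability with $\Omega_v$ as the estimated attraction region, and uniqueness of the critical point of a strongly convex function gives uniqueness of $\tilde v$.

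The only genuinely nontrivial step is the interiority argument in (ii): without $p_i>0$ the logarithmic barrier may vanish or change sign, so the minimizer could escape to the boundary of the orthant and the claim would fail. Under the hypothesis $p_i>0$, however, the logarithmic barrier together with the $G\succ 0$ coercivity closes the argument, and steps (i) and the invocation of Theorem 2 are otherwise immediate.
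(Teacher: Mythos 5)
Your core argument is the same as the paper's: since $p_i>0$ and $k_i\geq 0$, the diagonal term $(P+K)X$ is positive semidefinite for every $x$ in the positive orthant, so the LMI in (\ref{eq:set}) reduces to $G\succeq \mu I_N$, which holds with $\mu=\lambda_{\min}(G)>0$ by the connectivity assumption; the paper phrases this as being able to ``increase $X$ as large as desired,'' and your explicit uniform choice of $\mu$ is simply a cleaner statement of the same observation. Where you genuinely go beyond the paper is your step (ii): the paper just invokes Theorem 1 to obtain a unique minimizer, but strong convexity on an \emph{open} set does not by itself guarantee that the infimum is attained in the interior, so your coercivity argument --- the barrier $-(p_i+k_i)\ln v_i\to+\infty$ as $v_i\to 0^{+}$ (which is exactly where $p_i>0$ is needed), and the quadratic form $\tfrac{1}{2}v^{\top}(G+K)v$ dominating the linear and logarithmic terms as $\|v\|\to\infty$ --- closes a gap the paper leaves implicit. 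Both routes deliver the corollary; yours is more self-contained at the cost of a little extra analysis, and correctly identifies the interiority of the equilibrium as the only step where the generation hypothesis is truly essential.
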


\begin{proof}
Notice that for the generation case $p_i>0$ and hence we can increase $X$ as large as desired in order to fulfill the generalized inequality.  The set of constraints is fulfilled even for $x_i\rightarrow \infty$ which correspond to the entire fisrt quadrant in both, the space $x$ and the space in $v$.
\end{proof}
This corollary only formalize a result that is well known for practical applications: generation case is not an stability issue being the constant power loads the real problem.  In the next subsections, we analyze the general case with both constant power generation and constant power loads.

\section{Stability analysis using convex optimization}

Theorem 3 allows to define different convex optimization models for the stability analysis.  In the following subsections we present different optimization models according to the desired analysis.

\subsection{Equilibrium point}

The existence and uniqueness of the equilibrium point on dc microgrids has been analyzed previously for different authors (see for example \cite{yo_elsevier} and \cite{yo_tps}).  Here, we use Theorem 3 and define a simple optimization model

\begin{model}[equilibrium point]
\begin{equation}
\underset{v_i}{\min} \; W(v)
\end{equation}
\end{model}

Hence, the equilibrium point of (\ref{eq:model}) can be obtained as $\tilde{v} = \text{argmin}\left\{W(v)\right\}$, given that $\Omega_x$ exists and contains the point $\tilde{x}_i=1/\tilde{v}_i^2$. The optimization problem can be solved by gradient or Newton's methods guaranteeing uniqueness and convergence if $W$ is  strongly-convex.   The gradient method has a linear convergence while the newton method has a quadratic convergence (see \cite{nemirovski} for more details).

\subsection{Region of attraction}

For the general case, the region of attraction can be estimated by maximizing the hypervolume of $\Omega_x$. However, the shape of $\Omega_x$ is a spectrahedron which can be quite complicated for being used in practical applications. For the sake of simplicity, we inscribe a hypercube $\Gamma_x$ inside $\Omega_x$ defined by $\Gamma_x=\left\{ x\in\Omega_x: x_i\geq \alpha\right\}$. Then, our objective is to maximize the hypervolume of $\Gamma_x$ by maximizing $\alpha$.  Therefore, an inner approximation of the region of attraction can be obtained by solving the following optimization model

\begin{model}[region of attraction]
\begin{align}
    \underset{\alpha,x_i}{\max} \; & \alpha \\
    (P+K)X+G + K &\succeq \mu I_N \\ 
    X &= diag(x) \\
    x_{i}&\geq \tilde{x}_i \\
    x_i &\geq \alpha
\end{align}
\end{model}

Notice this is a convex optimization problem. If the result is a positive $\alpha$ then the equilibrium is asymptotically stable and the values of $x_i$ define the limits of the hyper cube that represents an estimation of the region of attraction (in Section VI we show that this estimation is very accurate in practice).   

It is possible to return to the original values by using the transformation from $\Omega_x$ to $\Omega_v$. In practice, this set can be considered as a limit for a voltage stability analysis and as an under-voltage protection, i.e the component must be disconnected for voltages lower than these voltages.  Notice that Model 2 does not depend on the voltage $v_0$ and hence it can be used during voltage sags in the main grid (we will talk more about this issue in the results section).

\section{Results}

The proposed methodology was evaluated in the 10-node dc-microgrid depicted in Fig \ref{fig:test}. Nominal values of the grid are 380V/1kW and each line segment has a resistance of $1.5m\Omega/m$.   The rest of the parameters are given in Table \ref{tab:parametros}.  

Before considering the entire grid, led use study the case in which only load $p_4$ and generation $p_7$ are connected.  The motivation of such a basic case, is that the dynamical model is in $\mathbb{R}^2$ allowing graphical representations (see \cite{yo_doscpl} for a nullcline analysis on the plane for this simplified case). 

\begin{table}[]
    \centering
    \caption{Parameters of the constant power terminals and results of each model}
    \label{tab:parametros}
    \begin{tabular}{|c|c|c|c||c|c|}
    \hline
         Node & $k_i (\%)$ & $p_i (kW)$ & $c_i (\mu F)$ & Model 1 & Model 2 \\
    \hline\hline     
    1 & 1 & -80 & 690 & 0.9893 & 0.0958  \\
    2 & 1 & -70 & 560 & 0.9851 & 0.0958  \\
    3 & 1 &  80 & 630 & 0.9917 & 0.0001  \\ 
    4 & 1 & -50 & 500 & 0.9881 & 0.0710  \\
    5 & 1 & -70 & 690 & 0.9954 & 0.0958  \\
    6 & 1 &  80 & 640 & 1.0047 & 0.0001  \\
    7 & 1 & 100 & 600 & 1.0129 & 0.0001  \\
    8 & 1 &  50 & 630 & 1.0026 & 0.0001  \\
    9 & 1 & -90 & 690 & 0.9973 & 0.0818  \\
    \hline     
    \end{tabular}
\end{table}

\begin{figure}[tb]
\footnotesize
\centering
\begin{tikzpicture}[x=1mm, y = 1mm, thick]

\node  at (0,0) [draw, circle]    (N1) {0};
\node  at (0,-20) [draw, circle]   (N2) {1};
\node at (-20,-20) [draw, circle]   (N3) {2};
\node at (20,-20) [draw, circle]   (N4) {3};
\node at (40,-20) [draw, circle]  (N5) {4};
\node at (0,-40) [draw, circle]  (N6) {5};
\node at (0,-60) [draw, circle]  (N7) {6};
\node at (0,-80) [draw, circle]  (N8) {7};
\node at (20,-60) [draw, circle]  (N9) {8};
\node at (-20,-80) [draw, circle]  (N10) {9};

\draw (N1) -- (N2) -- (N6) -- (N7) -- (N8) -- (N10);
\draw (N2) -- (N4) -- (N5);
\draw (N2) -- (N3);
\draw (N7) -- (N9);

\draw[blue!50!green, fill = blue!50!green!30] (-3,5) rectangle +(6,6);
\draw[blue!50!green] (-3,5) -- +(6,6);
\node at (-1,9.5) {$\approx$};
\node at (1.5,7) {$=$};
\draw[blue!50!green] (N1) -- (0,5);
\node at (15,9) [text width = 50, blue!50!green] {Connection to the AC grid};
\node at (15,0) [text width = 50] {constant voltage 1pu};

\draw[blue!50!green] (N3) -- +(0,10);
\draw[blue!50!green] (N7) -- +(-10,10);
\draw[blue!50!green] (N4) -- +(0,10);
\draw[blue!50!green] (N6) -- +(10,0);
\draw[blue!50!green] (N9) -| +(10,10);
\draw[blue!50!green] (N8) -| +(10,10);
\draw[blue!50!green] (N2) -- +(-10,-10);
\draw[blue!50!green] (N10) -- +(0,10);
\draw[blue!50!green] (N5) -- +(0,10);

\node[rotate=90] at (-2,-10) {50m};
\node at (-10,-18) {15m};
\node at ( 10,-18) {20m};
\node at (30,-18) {18m};
\node[rotate=90] at (-2,-30) {23m};
\node[rotate=90] at (-2,-50) {17m};
\node[rotate=90] at (-2,-70) {21m};
\node at (10,-58) {13m};
\node at (-10,-78) {15m};

\begin{scope}[ very thick, scale=0.5, xshift=120, yshift=-310]
	\fill[gray!50] (0,0) -- +(5,5) -- +(32,7) -- +(21,0) -- cycle;
	\fill[left color = green!40!blue,right color=green!40!blue!60] (0,0) -- +(5,21) -- +(26,21) -- +(21,0) -- cycle;
	\foreach \x in {0,3,...,21}  \draw[-,white] (\x,0) -- +(5,21);
	\foreach \y in {0,3,...,21}  \draw[-,white] (0.25*\y,\y) -- +(21,0);	
\end{scope}

\begin{scope}[ very thick, scale=0.5, xshift=30, yshift=-430]
	\fill[gray!50] (0,0) -- +(5,5) -- +(32,7) -- +(21,0) -- cycle;
	\fill[left color = green!40!blue,right color=green!40!blue!60] (0,0) -- +(5,21) -- +(26,21) -- +(21,0) -- cycle;
	\foreach \x in {0,3,...,21}  \draw[-,white] (\x,0) -- +(5,21);
	\foreach \y in {0,3,...,21}  \draw[-,white] (0.25*\y,\y) -- +(21,0);	
\end{scope}

\begin{scope}[ very thick, scale=0.5, xshift=-110, yshift=-310]
	\fill[gray!50] (0,0) -- +(5,5) -- +(32,7) -- +(21,0) -- cycle;
	\fill[left color = green!40!blue,right color=green!40!blue!60] (0,0) -- +(5,21) -- +(26,21) -- +(21,0) -- cycle;
	\foreach \x in {0,3,...,21}  \draw[-,white] (\x,0) -- +(5,21);
	\foreach \y in {0,3,...,21}  \draw[-,white] (0.25*\y,\y) -- +(21,0);	
\end{scope}

\begin{scope}[ very thick, scale=0.5, xshift=60, yshift=-80]
	\fill[gray!50] (0,0) -- +(5,5) -- +(32,7) -- +(21,0) -- cycle;
	\fill[left color = green!40!blue,right color=green!40!blue!60] (0,0) -- +(5,21) -- +(26,21) -- +(21,0) -- cycle;
	\foreach \x in {0,3,...,21}  \draw[-,white] (\x,0) -- +(5,21);
	\foreach \y in {0,3,...,21}  \draw[-,white] (0.25*\y,\y) -- +(21,0);	
\end{scope}

\begin{scope}[gray, xshift=-50, yshift=-30, scale=0.3]
\draw[black,  fill=gray] (-22,2) -- +(4,15) -- +(40,15) -- +(44,0) -- cycle;
\draw[black,  fill=gray] (-8,17) -- +(0,3) -- +(4,3) -- +(4,0) -- cycle;
\draw[gray!30,top color=gray!30, bottom color=white] (-20,1) rectangle +(40,-2);
\draw[gray!30,top color=gray!30, bottom color=white] (-20,-1) rectangle +(40,-2);
\draw[gray!30,top color=gray!30, bottom color=white] (-20,-3) rectangle +(40,-2);
\draw[gray!30,top color=gray!30, bottom color=white] (-20,-5) rectangle +(40,-2);
\draw[gray!30,top color=gray!30, bottom color=white] (-20,-7) rectangle +(40,-2);
\draw[gray!30,top color=gray!30, bottom color=white] (-20,-9) rectangle +(40,-2);
\draw[gray!30,top color=gray!30, bottom color=white] (-20,-11) rectangle +(40,-2);
\draw[gray!30,top color=gray!30, bottom color=white] (-20,-13) rectangle +(40,-2);
\draw[black, fill=blue!70!green!50] (-17,-9) rectangle +(6,7);
\draw[black] (-14,-9) -- +(0,7);
\draw[black] (-17,-5.5) -- +(6,0);
\draw[black, fill=blue!70!green!50] (-8,-9) rectangle +(6,7);
\draw[black] (-5,-9) -- +(0,7);
\draw[black] (-8,-5.5) -- +(6,0);
\draw[black,fill=blue!70!green!50] (11,-9) rectangle +(6,7);
\draw[black] (14,-9) -- +(0,7);
\draw[black] (11,-5.5) -- +(6,0);
\draw[black, fill=gray] (1,-15) rectangle +(7,14);
\draw[gray!30,fill] (7,-8) circle (0.5);
\draw[black] (-20,1) rectangle +(40,-16);
\draw[black!70, fill] (-22,1) rectangle +(44,1);
\draw[black!70, fill] (0,-15.5) rectangle +(9,1);
\end{scope}
\begin{scope}[gray, xshift=40, yshift=-120, scale=0.3]
\draw[black,  fill=gray] (-22,2) -- +(4,15) -- +(40,15) -- +(44,0) -- cycle;
\draw[black,  fill=gray] (-8,17) -- +(0,3) -- +(4,3) -- +(4,0) -- cycle;
\draw[gray!30,top color=gray!30, bottom color=white] (-20,1) rectangle +(40,-2);
\draw[gray!30,top color=gray!30, bottom color=white] (-20,-1) rectangle +(40,-2);
\draw[gray!30,top color=gray!30, bottom color=white] (-20,-3) rectangle +(40,-2);
\draw[gray!30,top color=gray!30, bottom color=white] (-20,-5) rectangle +(40,-2);
\draw[gray!30,top color=gray!30, bottom color=white] (-20,-7) rectangle +(40,-2);
\draw[gray!30,top color=gray!30, bottom color=white] (-20,-9) rectangle +(40,-2);
\draw[gray!30,top color=gray!30, bottom color=white] (-20,-11) rectangle +(40,-2);
\draw[gray!30,top color=gray!30, bottom color=white] (-20,-13) rectangle +(40,-2);
\draw[black, fill=blue!70!green!50] (-17,-9) rectangle +(6,7);
\draw[black] (-14,-9) -- +(0,7);
\draw[black] (-17,-5.5) -- +(6,0);
\draw[black, fill=blue!70!green!50] (-8,-9) rectangle +(6,7);
\draw[black] (-5,-9) -- +(0,7);
\draw[black] (-8,-5.5) -- +(6,0);
\draw[black,fill=blue!70!green!50] (11,-9) rectangle +(6,7);
\draw[black] (14,-9) -- +(0,7);
\draw[black] (11,-5.5) -- +(6,0);
\draw[black, fill=gray] (1,-15) rectangle +(7,14);
\draw[gray!30,fill] (7,-8) circle (0.5);
\draw[black] (-20,1) rectangle +(40,-16);
\draw[black!70, fill] (-22,1) rectangle +(44,1);
\draw[black!70, fill] (0,-15.5) rectangle +(9,1);
\end{scope}

\begin{scope}[gray, xshift=100, yshift=-30, scale=0.3]
\draw[black,  fill=gray] (-22,2) -- +(4,15) -- +(40,15) -- +(44,0) -- cycle;
\draw[black,  fill=gray] (-8,17) -- +(0,3) -- +(4,3) -- +(4,0) -- cycle;
\draw[gray!30,top color=gray!30, bottom color=white] (-20,1) rectangle +(40,-2);
\draw[gray!30,top color=gray!30, bottom color=white] (-20,-1) rectangle +(40,-2);
\draw[gray!30,top color=gray!30, bottom color=white] (-20,-3) rectangle +(40,-2);
\draw[gray!30,top color=gray!30, bottom color=white] (-20,-5) rectangle +(40,-2);
\draw[gray!30,top color=gray!30, bottom color=white] (-20,-7) rectangle +(40,-2);
\draw[gray!30,top color=gray!30, bottom color=white] (-20,-9) rectangle +(40,-2);
\draw[gray!30,top color=gray!30, bottom color=white] (-20,-11) rectangle +(40,-2);
\draw[gray!30,top color=gray!30, bottom color=white] (-20,-13) rectangle +(40,-2);
\draw[black, fill=blue!70!green!50] (-17,-9) rectangle +(6,7);
\draw[black] (-14,-9) -- +(0,7);
\draw[black] (-17,-5.5) -- +(6,0);
\draw[black, fill=blue!70!green!50] (-8,-9) rectangle +(6,7);
\draw[black] (-5,-9) -- +(0,7);
\draw[black] (-8,-5.5) -- +(6,0);
\draw[black,fill=blue!70!green!50] (11,-9) rectangle +(6,7);
\draw[black] (14,-9) -- +(0,7);
\draw[black] (11,-5.5) -- +(6,0);
\draw[black, fill=gray] (1,-15) rectangle +(7,14);
\draw[gray!30,fill] (7,-8) circle (0.5);
\draw[black] (-20,1) rectangle +(40,-16);
\draw[black!70, fill] (-22,1) rectangle +(44,1);
\draw[black!70, fill] (0,-15.5) rectangle +(9,1);
\end{scope}

\begin{scope}[gray, xshift=-50, yshift=-190, scale=0.3]
\draw[black,  fill=gray] (-22,2) -- +(4,15) -- +(40,15) -- +(44,0) -- cycle;
\draw[black,  fill=gray] (-8,17) -- +(0,3) -- +(4,3) -- +(4,0) -- cycle;
\draw[gray!30,top color=gray!30, bottom color=white] (-20,1) rectangle +(40,-2);
\draw[gray!30,top color=gray!30, bottom color=white] (-20,-1) rectangle +(40,-2);
\draw[gray!30,top color=gray!30, bottom color=white] (-20,-3) rectangle +(40,-2);
\draw[gray!30,top color=gray!30, bottom color=white] (-20,-5) rectangle +(40,-2);
\draw[gray!30,top color=gray!30, bottom color=white] (-20,-7) rectangle +(40,-2);
\draw[gray!30,top color=gray!30, bottom color=white] (-20,-9) rectangle +(40,-2);
\draw[gray!30,top color=gray!30, bottom color=white] (-20,-11) rectangle +(40,-2);
\draw[gray!30,top color=gray!30, bottom color=white] (-20,-13) rectangle +(40,-2);
\draw[black, fill=blue!70!green!50] (-17,-9) rectangle +(6,7);
\draw[black] (-14,-9) -- +(0,7);
\draw[black] (-17,-5.5) -- +(6,0);
\draw[black, fill=blue!70!green!50] (-8,-9) rectangle +(6,7);
\draw[black] (-5,-9) -- +(0,7);
\draw[black] (-8,-5.5) -- +(6,0);
\draw[black,fill=blue!70!green!50] (11,-9) rectangle +(6,7);
\draw[black] (14,-9) -- +(0,7);
\draw[black] (11,-5.5) -- +(6,0);
\draw[black, fill=gray] (1,-15) rectangle +(7,14);
\draw[gray!30,fill] (7,-8) circle (0.5);
\draw[black] (-20,1) rectangle +(40,-16);
\draw[black!70, fill] (-22,1) rectangle +(44,1);
\draw[black!70, fill] (0,-15.5) rectangle +(9,1);
\end{scope}

\begin{scope}[gray, xshift=-50, yshift=-90, scale=0.3]
\draw[black,  fill=gray] (-22,2) -- +(4,15) -- +(40,15) -- +(44,0) -- cycle;
\draw[black,  fill=gray] (-8,17) -- +(0,3) -- +(4,3) -- +(4,0) -- cycle;
\draw[gray!30,top color=gray!30, bottom color=white] (-20,1) rectangle +(40,-2);
\draw[gray!30,top color=gray!30, bottom color=white] (-20,-1) rectangle +(40,-2);
\draw[gray!30,top color=gray!30, bottom color=white] (-20,-3) rectangle +(40,-2);
\draw[gray!30,top color=gray!30, bottom color=white] (-20,-5) rectangle +(40,-2);
\draw[gray!30,top color=gray!30, bottom color=white] (-20,-7) rectangle +(40,-2);
\draw[gray!30,top color=gray!30, bottom color=white] (-20,-9) rectangle +(40,-2);
\draw[gray!30,top color=gray!30, bottom color=white] (-20,-11) rectangle +(40,-2);
\draw[gray!30,top color=gray!30, bottom color=white] (-20,-13) rectangle +(40,-2);
\draw[black, fill=blue!70!green!50] (-17,-9) rectangle +(6,7);
\draw[black] (-14,-9) -- +(0,7);
\draw[black] (-17,-5.5) -- +(6,0);
\draw[black, fill=blue!70!green!50] (-8,-9) rectangle +(6,7);
\draw[black] (-5,-9) -- +(0,7);
\draw[black] (-8,-5.5) -- +(6,0);
\draw[black,fill=blue!70!green!50] (11,-9) rectangle +(6,7);
\draw[black] (14,-9) -- +(0,7);
\draw[black] (11,-5.5) -- +(6,0);
\draw[black, fill=gray] (1,-15) rectangle +(7,14);
\draw[gray!30,fill] (7,-8) circle (0.5);
\draw[black] (-20,1) rectangle +(40,-16);
\draw[black!70, fill] (-22,1) rectangle +(44,1);
\draw[black!70, fill] (0,-15.5) rectangle +(9,1);
\end{scope}

\end{tikzpicture} 
\caption{A 10-node microgrid constant power generation and constant power loads. The figure shows solar panels and residential users but the model is general enough for any type of distributed resource.}
\label{fig:test}
\end{figure}
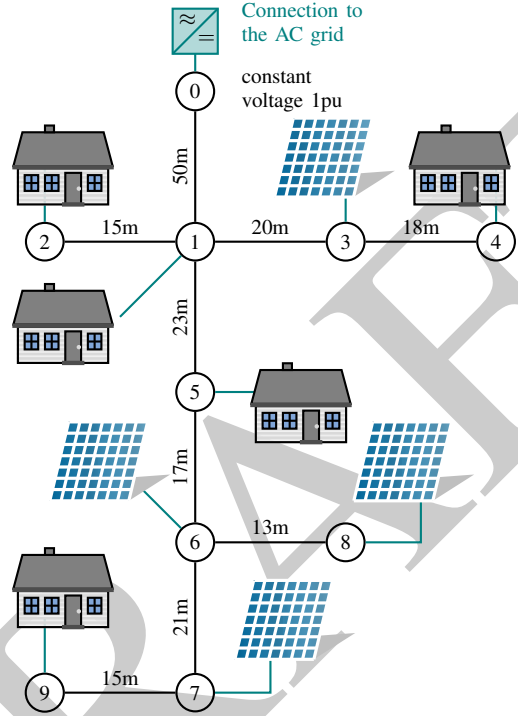

The equilibrium point was obtained by Model 1 using the Newton's method giving $v_4=  0.8903$ and $v_7=0.9427$. Theorem 1 guarantee these values are unique in the convex set $\Omega$ which defines the region of attraction. This region can be estimated by Model 2 obtaining the minimum voltages $v_{min} =[0.182,\;0.002] $ which define an open subset in $\mathbb{R}^2$ as depicted in Figure \ref{fig:roa_1}; this figure depicts also the equilibrium point and three trajectories from different initial conditions calculated by the function ode45 of Matlab. As expected, initial points inside the estimation of the region of attraction go directly to the equilibrium.  A fourth trajectory starting in the point $(0.1,0.1)$ (outside of the region of attraction) was calculated but not depicted in the figure since this trajectory was unstable and the function ode45 diverged. 

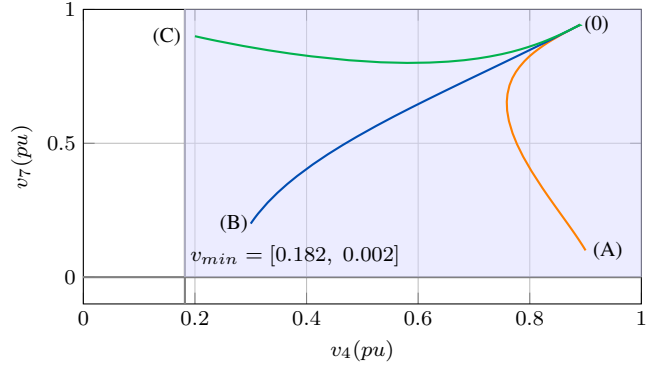
\begin{figure}[tb]
    \centering
    \footnotesize
    \begin{tikzpicture}
	\begin{axis}[xlabel=$v_4 (pu)$,ylabel=$v_7 (pu)$,width=9cm,height=5.5cm,ymin=-0.1,ymax=1,xmin=-0,xmax=1, xmajorgrids, ymajorgrids]
	\draw[gray, thick] (axis cs:-0.1,0) -- (axis cs: 1,0);
	\draw[gray, thick] (axis cs:0.182,-0.1) -- (axis cs: 0.182,1);

    \fill[blue!15, opacity=0.5] (axis cs: 0.18,0) rectangle (1,1);

	\addplot[orange,thick] coordinates{(0.9,0.1)(0.89857,0.10495)(0.89716,0.10977)(0.89575,0.11447)(0.89437,0.11906)(0.88761,0.14068)(0.88115,0.16052)(0.87496,0.17902)(0.86904,0.19647)(0.85557,0.23567)(0.84344,0.27095)(0.83252,0.3032)(0.8227,0.33298)(0.80304,0.3963)(0.7881,0.45031)(0.77706,0.49699)(0.76916,0.5378)(0.76267,0.58312)(0.75947,0.62185)(0.75883,0.65515)(0.76009,0.68404)(0.76367,0.71626)(0.76888,0.74344)(0.77518,0.7665)(0.78204,0.78627)(0.7906,0.80699)(0.79918,0.82444)(0.80757,0.83918)(0.81552,0.8518)(0.82451,0.86521)(0.83267,0.87642)(0.84005,0.88579)(0.84659,0.89373)(0.85374,0.90237)(0.85979,0.90942)(0.86495,0.91511)(0.86927,0.9198)(0.87379,0.92491)(0.87737,0.92886)(0.88023,0.93184)(0.88246,0.93416)(0.8846,0.93667)(0.88618,0.93844)(0.8874,0.93954)(0.88827,0.94033)(0.88857,0.94152)(0.88893,0.94215)(0.88983,0.9419)}; 
	
	\addplot[blue!70!green,thick] coordinates{(0.3,0.2)(0.30327,0.20986)(0.30656,0.21938)(0.30989,0.22859)(0.31325,0.23752)(0.33035,0.27862)(0.34781,0.31497)(0.36543,0.34772)(0.38306,0.37763)(0.41109,0.42087)(0.43848,0.45926)(0.46501,0.49372)(0.4905,0.52497)(0.5251,0.56524)(0.55725,0.60085)(0.58694,0.63254)(0.61422,0.66096)(0.64764,0.69524)(0.67717,0.72509)(0.70321,0.75115)(0.72609,0.77401)(0.7538,0.80179)(0.77681,0.82495)(0.79587,0.84429)(0.8117,0.86045)(0.82947,0.87852)(0.84317,0.89264)(0.85354,0.90373)(0.86158,0.91236)(0.86842,0.91935)(0.87361,0.92475)(0.87742,0.92896)(0.88035,0.93218)(0.8832,0.93498)(0.88522,0.93705)(0.88649,0.93866)(0.88742,0.93981)(0.88864,0.94045)(0.88935,0.941)(0.88932,0.94182)(0.88934,0.94234)};
	
	\addplot[green!70!blue,thick] coordinates {(0.2,0.9)(0.21001,0.89535)(0.21994,0.89086)(0.22978,0.88652)(0.23951,0.88233)(0.28646,0.86348)(0.33029,0.84786)(0.37086,0.83511)(0.40823,0.82487)(0.47597,0.81023)(0.53279,0.80258)(0.58029,0.8002)(0.62025,0.80147)(0.65386,0.80504)(0.68244,0.81033)(0.70681,0.8168)(0.72774,0.8239)(0.74962,0.83285)(0.76803,0.84189)(0.78354,0.85078)(0.79678,0.85927)(0.81049,0.86875)(0.82195,0.87742)(0.83151,0.88533)(0.8396,0.89241)(0.84823,0.90008)(0.85529,0.90666)(0.86103,0.91235)(0.86579,0.91718)(0.87091,0.92224)(0.87493,0.92632)(0.87802,0.92965)(0.88047,0.93231)(0.88311,0.93494)(0.88503,0.93691)(0.88631,0.93845)(0.88727,0.93958)(0.88846,0.94038)(0.88918,0.94101)(0.88929,0.94175)(0.8894,0.94221)(0.88992,0.94209)(0.89019,0.94212)(0.88999,0.94249)(0.88988,0.94274)};
	
	\node at (axis cs: 0.38,0.08) {$v_{min}=[0.182,\;0.002]$};
	\node at (axis cs:0.92, 0.9427) {(0)};
	\node at (axis cs:0.94, 0.1) {(A)};
	\node at (axis cs:0.27, 0.2) {(B)};
	\node at (axis cs:0.15, 0.9) {(C)};
	
	\end{axis}
\end{tikzpicture}
    \caption{Estimation of the region of attraction for the case of two constant power loads.  Three different trajectories starting from points (A), (B) and (C) converge to the equilibrium point (0)}
    \label{fig:roa_1}
\end{figure}

Let us consider now the general case with 7 constant power terminals: 3 constant power generations (nodes 3,7,8) and 4 constant power loads (nodes 2,3,5,9).  The main results of each model are given in Table \ref{tab:parametros}.  Let us analyze each one of them.

The equilibrium point and the estimation of the region of attraction were obtained by Models 1 and 2 respectively. The estimation of the region of attraction is a hypercuve with the same interpretation as in the previous case.  Now, this region can be used as criteria for voltage protection.  \textcolor{black}{In order to evaluate the under-voltage protection criteria, let us consider the case of a voltage sag in the main converter;  the grid starts from an initial condition of $v_0=1pu$ but the voltage is reduced until $v_0=0.2pu$ for $t=0.05s$. As result, nodal voltages reach values bellow $v_{min}$ and the system becomes unstable (instability is diagnosed numerically since the function ode45 of Matlab diverge)}. This problem was avoided by disconnecting terminals with values below $v_{min}$.  Figure \ref{fig:protection} shows this type of protection for Load $p_2$ (similar behaviour is obtained for the rest of the terminals).  Notice that the region of attraction was calculated considering the entire grid, but the criteria for the under-voltage protection is completely local, meaning that only its own measure of voltage is required.

\begin{figure}[tb]
    \centering
    \footnotesize
    \begin{tikzpicture}
	\begin{axis}[xlabel=$t(pu)$,ylabel=$p_2(pu)$,width=9cm,height=4cm,ymin=0,ymax=1,xmin=0,xmax=0.1, xmajorgrids, ymajorgrids]
	\addplot[blue!70!green,thick] coordinates{(0,1)(0.0001,0.8926)(0.0002,0.80329)(0.0003,0.73919)(0.0004,0.68891)(0.0005,0.64611)(0.0006,0.60836)(0.0007,0.57462)(0.0008,0.54334)(0.0009,0.51456)(0.001,0.48767)(0.0011,0.4622)(0.0012,0.43841)(0.0013,0.41559)(0.0014,0.39388)(0.0015,0.3732)(0.0016,0.35312)(0.0017,0.33397)(0.0018,0.31535)(0.0019,0.29723)(0.002,0.27975)(0.0021,0.26246)(0.0022,0.2456)(0.0023,0.22892)(0.0024,0.21224)(0.0025,0.19565)(0.0026,0.17867)(0.0027,0.1612)(0.0028,0.14276)(0.0029,0.12238)(0.003,0.098287)(0.0031,0.095782)(0.0032,0.095743)(0.0033,0.095625)(0.0034,0.095677)(0.0035,0.095622)(0.0036,0.095704)(0.0037,0.095698)(0.0038,0.095696)(0.0039,0.095687)(0.004,0.095678)(0.005,0.085158)(0.01,0.085066)(0.02,0.085066)(0.03,0.085067)(0.04,0.085067)(0.05,0.085066)(0.051,0.50186)(0.052,0.68065)(0.053,0.78989)(0.054,0.85947)(0.055,0.90422)(0.056,0.93291)(0.057,0.95129)(0.058,0.9632)(0.059,0.97105)(0.06,0.97614)(0.061,0.97925)(0.062,0.98118)(0.063,0.98256)(0.064,0.98358)(0.065,0.98418)(0.066,0.98441)(0.067,0.98453)(0.068,0.98478)(0.069,0.98502)(0.07,0.98506)(0.08,0.98508)(0.09,0.98499)(0.1,0.98505)};
	\end{axis}
	\end{tikzpicture}
    \begin{tikzpicture}
	\begin{axis}[xlabel=$t(pu)$,ylabel=$v_2(pu)$,width=9cm,height=4cm,ymin=-80,ymax=5,xmin=0,xmax=0.1, xmajorgrids, ymajorgrids]	
	\addplot[blue!70!green,thick] coordinates{(0,-70)(0.003,-70.8352)(0.003,0)(0.05,0)(0.051,-70.4981)(0.052,-70.3192)(0.053,-70.21)(0.1,-70.015)};
	\end{axis}
\end{tikzpicture}
    \caption{Using the region of attraction as indicator for under-voltage protection.  Due to the voltage sag in $v_0$ the voltage in node 2 is reduced until reach a value below that $v_{min}$.  The system disconnects the load until the voltages are again feasible.  Stability is guaranteed in the entire process }
    \label{fig:protection}
\end{figure}
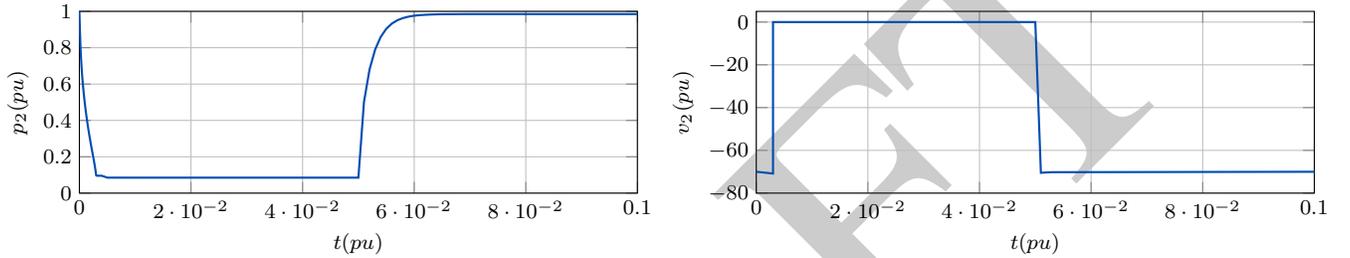

\textcolor{black}{These numerical simulations are available in \cite{yomatlab} in order to reproduce the results and evaluate other possible configurations. }

\section{Conclusions}

A convex based methodology for transient stability analysis of dc-micrids was presented.  This methodology was based on a gradient representation of the grid and the general observation that the energy function is strongly-convex.  The dynamical problem was then transformed into \textcolor{black}{optimization problems in which Model 1 allowed to find the equilibrium point and Model 2 estimated the region of attraction. The later is also used in practice as a criteria for under-voltage protection.}
The main result was demonstrated mathematically using the Lyapunov method under well defined consideration regarded the convexity of the Lyapunov function.  A complete set of simulations was performed in order demonstrate the practical applicability of the methodology.

\bibliographystyle{IEEEtran}
\bibliography{bibliografia}

\begin{thebibliography}{10}
\providecommand{\url}[1]{#1}
\csname url@samestyle\endcsname
\providecommand{\newblock}{\relax}
\providecommand{\bibinfo}[2]{#2}
\providecommand{\BIBentrySTDinterwordspacing}{\spaceskip=0pt\relax}
\providecommand{\BIBentryALTinterwordstretchfactor}{4}
\providecommand{\BIBentryALTinterwordspacing}{\spaceskip=\fontdimen2\font plus
\BIBentryALTinterwordstretchfactor\fontdimen3\font minus
  \fontdimen4\font\relax}
\providecommand{\BIBforeignlanguage}[2]{{%
\expandafter\ifx\csname l@#1\endcsname\relax
\typeout{** WARNING: IEEEtran.bst: No hyphenation pattern has been}%
\typeout{** loaded for the language `#1'. Using the pattern for}%
\typeout{** the default language instead.}%
\else
\language=\csname l@#1\endcsname
\fi
#2}}
\providecommand{\BIBdecl}{\relax}
\BIBdecl

\bibitem{review_dcmicrogirds}
L.~Meng, Q.~Shafiee, G.~F. Trecate, H.~Karimi, D.~Fulwani, X.~Lu, and J.~M.
  Guerrero, ``Review on control of dc microgrids and multiple microgrid
  clusters,'' \emph{IEEE Journal of Emerging and Selected Topics in Power
  Electronics}, vol.~5, no.~3, pp. 928--948, Sept 2017.

\bibitem{adhoc}
W.~Inam, J.~A. Belk, K.~Turitsyn, and D.~J. Perreault, ``Stability, control,
  and power flow in ad hoc dc microgrids,'' in \emph{2016 IEEE 17th Workshop on
  Control and Modeling for Power Electronics (COMPEL)}, June 2016, pp. 1--8.

\bibitem{DC_Parte1}
T.~Dragicevic, X.~Lu, J.~C. Vasquez, and J.~M. Guerrero, ``Dc microgrids part
  i: A review of control strategies and stabilization technique,'' \emph{IEEE
  Transactions on Power Electronics}, no.~7, pp. 4876--4891, July.

\bibitem{koguiman}
S.~Sanchez, R.~Ortega, R.~Grino, G.~Bergna, and M.~Molinas, ``Conditions for
  existence of equilibria of systems with constant power loads,'' \emph{IEEE
  Transactions on Circuits and Systems I: Regular Papers}, vol.~61, no.~7, pp.
  2204--2211, July 2014.

\bibitem{romeo_existencia_equilibrio}
N.~Barabanov, R.~Ortega, R.~Grino, and B.~Polyak, ``On existence and stability
  of equilibria of linear time-invariant systems with constant power loads,''
  \emph{IEEE Transactions on Circuits and Systems I: Regular Papers}, vol.~63,
  no.~1, pp. 114--121, Jan 2016.

\bibitem{yo_elsevier}
\BIBentryALTinterwordspacing
A.~Garces, ``Uniqueness of the power flow solutions in low voltage direct
  current grids,'' \emph{Electric Power Systems Research}, vol. 151, pp. 149 --
  153, 2017. [Online]. Available:
  \url{http://www.sciencedirect.com/science/article/pii/S0378779617302298}
\BIBentrySTDinterwordspacing

\bibitem{yo_tps}
------, ``On the convergence of newton's method in power flow studies for dc
  microgrids,'' \emph{IEEE Transactions on Power Systems}, vol.~33, no.~5, pp.
  5770--5777, Sept 2018.

\bibitem{small}
R.~Majumder, ``Some aspects of stability in microgrids,'' \emph{IEEE
  Transactions on Power Systems}, vol.~28, no.~3, pp. 3243--3252, Aug 2013.

\bibitem{small2}
A.~A.~A. Radwan and Y.~A.~I. Mohamed, ``Linear active stabilization of
  converter-dominated dc microgrids,'' \emph{IEEE Transactions on Smart Grid},
  vol.~3, no.~1, pp. 203--216, March 2012.

\bibitem{small3}
Z.~Li and M.~Shahidehpour, ``Small-signal modeling and stability analysis of
  hybrid ac/dc microgrids,'' \emph{IEEE Transactions on Smart Grid}, pp. 1--1,
  2018.

\bibitem{transient_review}
M.~Kabalan, P.~Singh, and D.~Niebur, ``Large signal lyapunov-based stability
  studies in microgrids: A review,'' \emph{IEEE Transactions on Smart Grid},
  vol.~8, no.~5, pp. 2287--2295, Sept 2017.

\bibitem{stab}
M.~Su, Z.~Liu, Y.~Sun, H.~Han, and X.~Hou, ``Stability analysis and
  stabilization methods of dc microgrid with multiple parallel-connected dc-dc
  converters loaded by cpls,'' \emph{IEEE Transactions on Smart Grid}, vol.~9,
  no.~1, pp. 132--142, Jan 2018.

\bibitem{cpl1}
J.~Liu, W.~Zhang, and G.~Rizzoni, ``Robust stability analysis of dc microgrids
  with constant power loads,'' \emph{IEEE Transactions on Power Systems},
  vol.~33, no.~1, pp. 851--860, Jan 2018.

\bibitem{estabilidad_sdp}
L.~Herrera, W.~Zhang, and J.~Wang, ``Stability analysis and controller design
  of dc microgrids with constant power loads,'' \emph{IEEE Transactions on
  Smart Grid}, vol.~8, no.~2, pp. 881--888, March 2017.

\bibitem{kron}
G.~Kron, \emph{Tensors for circuits}.\hskip 1em plus 0.5em minus 0.4em\relax
  Dover Publications, 1942.

\bibitem{danilo}
O.~D. Montoya, ``Numerical approximation of the maximum power consumption in
  dc-mgs with cpls via an sdp model,'' \emph{IEEE Transactions on Circuits and
  Systems II: Express Briefs}, pp. 1--1, 2018.

\bibitem{hvdc}
A.~Doria-Cerezo, J.~M. Olm, M.~di~Bernardo, and E.~Nuno, ``Modelling and
  control for bounded synchronization in multi-terminal vsc-hvdc transmission
  networks,'' \emph{IEEE Transactions on Circuits and Systems I: Regular
  Papers}, vol.~63, no.~6, pp. 916--925, June 2016.

\bibitem{nemirovski}
Y.~Nesterov and A.~Nemirovskii, \emph{Interior point polynomial algorithms in
  convex programming}, 1st~ed., ser. 10.\hskip 1em plus 0.5em minus 0.4em\relax
  Philadelphia: SIAM studies in applied mathematics, 1994, vol.~1.

\bibitem{yo_doscpl}
A.~Garces and A.~Gutierrez, ``On the stability of dc microgrids with two
  constant power devices,'' in \emph{2018 IEEE Green Technologies Conference
  (GreenTech)}, April 2018, pp. 33--37.

\bibitem{yomatlab}
\BIBentryALTinterwordspacing
A.~Garces. (2016, May 20) Matlab central file exchange. [Online]. Available:
  \url{http://www.mathworks.com/matlabcentral/
  profile/authors/3009175-alejandro-garces}
\BIBentrySTDinterwordspacing

\end{thebibliography}
\end{document}